\numberwithin{equation}{section}
\newtheorem{theorem}{Theorem}[section]
\newtheorem{proposition}[theorem]{Proposition}
\newtheorem{lemma}[theorem]{Lemma}
\newtheorem{rem}[theorem]{Remark}
\theoremstyle{definition}
\newcommand{\E}{\mathbb{E}}
\newcommand{\N}{\mathbb{N}}
\newcommand{\I}{\mathbbm{1}}
\newcommand{\Pro}{\mathbb{P}}
\renewcommand{\P}{\mathbb{P}}
\newcommand{\R}{\mathbb{R}}
\newcommand{\teoref}[1]{Theorem~\ref{#1}}
\renewcommand{\eqref}[1]{(\ref{#1})}
\providecommand{\keywords}[1]
{
  \small	
  \noindent\textbf{Keywords.} #1
}
\begin{document}
	
\title{Weak approximation for Gaussian processes from renewal processes}
\author{Xavier Bardina\orcidlink{0000-0003-1803-3401}$^+$\footnote{Corresponding author.}, Salim Boukfal\orcidlink{0009-0007-5296-7087}$^+$, Marc Cano\orcidlink{0009-0005-6568-3513}$^+$ and Carles Rovira\orcidlink{0000-0001-9021-9804}$^\times$
\\{\footnotesize $^+$ Dept. Matem\`atiques, Edifici C, Universitat Aut\`onoma de Barcelona, 08193-Bellaterra}
\\{\footnotesize $^\times$ Facultat de Matem\`atiques, Universitat de Barcelona, Gran Via 585, 08007-Barcelona}
\\{\footnotesize E-mail: Xavier.Bardina@uab.cat, Salim.Boukfal@uab.cat, Marc.Cano@uab.cat, Carles.Rovira@ub.edu}
\footnote{All authors are supported by the grant PID2021-123733NB-I00 funded by MCIN/AEI/10.13039/501100011033. Salim Boukfal is also supported by the program of predoctoral grants AGAUR-FI (2025 FI-1 00119) Joan Oró from the Department of Research and Universities of the Government of Catalonia and the co-funding of the European Social Fund Plus (ESF+). Marc Cano is also supported by the predoctoral grant PRE2022-103868 funded by MCIN/AEI/ 10.13039/501100011033.}}
\date{}
\maketitle

\begin{abstract}
In \cite{bardina2023strong} a family of processes that converge strongly towards Brownian motion, defined from renewal processes, are constructed. In this paper we prove that some of these processes can be utilized to build approximations of Gaussian processes such as fractional Brownian motion or multiple Stratonovich integrals and we provide sufficient conditions on renewal processes to ensure that the convergence holds.  An illustrative example of such a Gaussian process is the fractional Brownian motion with any Hurst parameter. 
\end{abstract}
\keywords{weak convergence, renewal process, fractional Brownian motion, multiple Stratonovich integrals}

\section{Introduction}
Let $\{N(t),\,t\geq0\}$ a standard Poisson process and, for all $n\in\mathbb{N}$, consider the process:
\[h_n=\left\{h_n(t):=\frac{1}{\sqrt{n}}\int_0^{nt}(-1)^{N(u)}du,\,t\in[0,T]\right\}.\]

In 1982, Stroock (see \cite{stroock}) established that the processes $h_n$ converge in law to a standard Brownian motion. 
These processes were initially introduced by Kac in 1956 (see \cite{kac1974stochastic}) to derive a solution to the telegraph equation from a Poisson process.

Several extensions of the Kac–Stroock result have been investigated in the literature. These include modifying the processes $h_n$
to approximate Gaussian processes other than Brownian motion, establishing stronger forms of convergence, and relaxing the structural assumptions on the approximating processes in order to obtain convergence to Brownian motion in more general settings.

An extension in the first direction, that is, modifying the processes $h_n$ to obtain approximations of other Gaussian processes, is presented in \cite{delgado2000weak} by Delgado and Jolis, who consider a class of Gaussian processes encompassing the fractional Brownian motion. For a stochastic process $Y$ with representation:
\[Y=\{Y_t=\int_0^1K(t,r)dW_r,\,t\in[0,1]\},\]
where $W$ is a standard Brownian motion and the kernel $K:[0,1]\times[0,1]\longrightarrow\mathbb{R}$ satisfies certain properties, they define the process
\[{Y}_t^n={\sqrt{n}}\int_0^1K(t,r)(-1)^{N(nr)}dr.\]
Their result proves that ${Y}_t^n$ converges in law to $Y$ in the space $\mathcal{C}([0,1])$. 
Also in this direction, in \cite{bardina2000weak} we find results of convergence to Stratonovich multiple integrals.

The second direction of extension concerns proving convergence in a stronger sense than convergence in distribution on the space of continuous functions. Several works obtain such stronger convergence results, in which the realizations of the processes converge almost surely and uniformly on the unit interval to Brownian motion. This is achieved by modifying the processes so that they do not start with a strictly increasing behaviour. The resulting modified processes, commonly referred to as uniform transport processes, are
\[X_n(t)=\frac{1}{\sqrt{n}}(-1)^{A}\int_0^{nt}(-1)^{N(u)}du,\]
where $A\sim \textrm{Bern}\left(\frac{1}{2}\right)$ is independent of the Poisson process $N$.
Griego, Heath and Ruiz-Moncayo \cite{griego1971almost} showed that these processes
converge strongly and uniformly on bounded time intervals to
Brownian motion. 

The third direction is weakening the conditions of the approximating processes to find generalizations of the processes $(-1)^{N(u)}$ that also converge to Brownian motion.
In  \cite{bardina2016} we prove that if  instead of using the
Poisson process we consider a process with independent increments
which characteristic function satisfies some properties we obtain
also the convergence towards the law of a Brownian
motion. Between the examples of processes that satisfy these conditions
there are included the L\'evy processes.

Finally, in the work of Bardina and Rovira \cite{bardina2023strong}, the authors introduce a family of processes constructed from a renewal process, which converge almost surely to Brownian motion. The objective of the present paper is to study two results concerning the weak approximation of Gaussian processes arising from these realizations of Brownian motion. In this sense, our contribution extends the framework in two directions: firstly, by approximating Gaussian processes other than Brownian motion, and secondly, by establishing convergence results for more general processes than those originally considered by Kac and Stroock.

To achieve this, we follow an approach analogous to that of Delgado and Jolis \cite{delgado2000weak} for our first main result (\teoref{teo1}), while the proof of our second result (\teoref{teo2}) is inspired by the ideas developed in Bardina and Jolis \cite{bardina2000weak}. To the best of the authors’ knowledge, these are the first weak convergence results obtained from processes constructed using renewal processes available in the literature. They shed light on the conditions required to ensure convergence once the convenient properties of the exponential inter-arrival distribution of the Poisson process are no longer present.

The paper is organized as follows. In Section 2, we introduce the renewal processes and the kernels that will be used to construct our approximations. Section 3 is devoted to the convergence towards integrals with respect to a deterministic kernel, while Section 4 addresses the convergence towards multiple integrals. Finally, an appendix is included, where we discuss renewal processes with geometrically distributed inter-arrival times.

\section{Renewal process}

Consider $\left\lbrace U_m\right\rbrace_{m\geq 1}$ a sequence of independent random variables which take only nonnegative values. Furthermore, assume that they are identically distributed with $\Pro\left(U_1=0\right)<1$ and $\E\left[\left(U_1\right)^l\right]<\infty$, for $l$ large enough (for our purposes, $l \geq 4$ suffices).

For each $k\geq 1$, consider the renewal sequence $S_k = U_1+\cdots +U_k$ and the counting renewal function 
\[
L(t) = \sum_{k=1}^{\infty} \mathbbm{1}_{[0,t]}(S_k).
\]
This renewal function counts the number of renewals in $[0,t]$.

\vspace{11pt}
Set $\left\lbrace \eta_m \right\rbrace_{m\geq 0}$ a sequence of independent identically distributed random variables following a $Bernoulli(\frac{1}{2})$ distribution and independent of the sequence $\left\lbrace U_m\right\rbrace_{m\geq 1}$. Then, we can construct the following renewal reward process
\[
T(t) = \eta_0 + \sum_{k=1}^{\infty}\eta_k\mathbbm{1}_{[0,t]}(S_k) = \sum_{l=0}^{L(t)}\eta_l.
\]

Given a strictly positive function $\beta$, we can define
\begin{equation*}
T_n(t) = T_{\beta(n)}(t) = T\left(\frac{t}{\beta(n)}\right) = \eta_0 + \sum_{k=1}^{\infty}\eta_k\mathbbm{1}_{\left[0,\frac{t}{\beta(n)}\right]}(S_k). 
\end{equation*}

Notice that, setting $U_m^n=\beta(n)\times U_m$, for all $m\geq 1$, we have that
\[
\beta(n)\times S_k = U_1^n+\cdots +U_k^n.
\]

We will consider the kernels
\begin{equation}\label{thekernel}
\displaystyle\theta_n(x):= \frac{1}{G(n)}(-1)^{T\left(\frac{x}{\beta(n)}\right)}
,
\end{equation}
where
\begin{equation*}
G(n) = \left(\beta(n)\frac{\E\left[\left(U_1\right)^2\right]}{\E\left[U_1\right]}\right)^{\frac{1}{2}},
\end{equation*}
with 
\[
\sum_{n\geq 1}\beta(n)<\infty.
\]

Let us recall that, under these hypotheses, in \cite{bardina2023strong} we prove that the processes 
\begin{equation}\label{bmcon}
    x_n(t) = \int_0^t \theta_n(u) du
\end{equation}
converge almost surely to the Brownian motion.

If, in addition, we assume that $U_1$ has a failure rate function bounded away from 0,
we can go a step further and show the convergence of processes constructed from the $x_n$ such as stochastic integrals or iterated integrals with respect to these (see Theorems \ref{teo1} and \ref{teo2}). 
More precisely, we will assume that the cumulative distribution function $F$ of $U_1$ satisfies the following hypothesis:

\begin{enumerate}
    \item[(G)] $F$ is given by
            $$F(t) = 1- \exp\left\{-\int_0^t r(s)ds\right\}, \quad t \geq 0, $$ 
            (and $F(t) = 0$ otherwise) for some non-negative function $r$ such that $\int_0^\infty r(s)ds = \infty$ and for which there is a positive constant $\lambda > 0 $ such that $r(s) \geq \lambda$ for every $s\geq 0$.
\end{enumerate}

\section{Convergence to Gaussian processes represented by a stochastic integral of a deterministic kernel }

Let $X=\left\lbrace X_t,~t\in[0,1] \right\rbrace$ be a centered Gaussian process with covariance function given by 
\begin{equation*}
    Cov\left(X_t,X_s\right) = \int_0^1 K(t,x)K(s,x)dx,
\end{equation*}
where $K\colon [0,1]\times [0,1]\to \R$ is a function satisfying the following assumptions:
\begin{equation}\label{assumptions}
(H)\left\lbrace\begin{matrix*}[l]
    (i)& & K~\text{is measurable and } K(0,x) = 0~\forall x\in [0,1].\\ \\
    (ii)& & \text{There exists a continuous increasing function }G\colon [0,1]\to\R \text{ and} \\ &&\text{a constant }\alpha>0~\text{such that for any }0\leq s<t\leq 1,\\ \\ 
    & &\displaystyle\int_0^1 \left(K(t,x)-K(s,x)\right)^2dx\leq \left(G(t)-G(s)\right)^{\alpha}.
\end{matrix*}\right.
\end{equation}

In our first theorem, we establish that the law of a continuous Gaussian process, represented by a stochastic integral of a deterministic kernel with respect to a standard Wiener process, can be weakly approximated by the law of processes $\left\lbrace Y^n\right\rbrace$ constructed using the kernels $\theta_n$.
More precisely, our first theorem states as follows:

\begin{theorem}\label{teo1}
     Suppose that $K$ satisfies hypothesis $(H)$ given in (\ref{assumptions}) and that 
     the distribution function of $U_1$ satisfies hypothesis $(G)$. Let $\left\lbrace Q^n \right\rbrace$ be the family of laws in $\mathcal{C}([0,1])$ of the processes $\left\lbrace Y^n\right\rbrace$ given by 
     \begin{equation*}
         Y_t^n = \int_0^1 K(t,x)\theta_n(x)dx,~~\text{for any }t\in[0,1],
     \end{equation*}
     where $\theta_n$ are the kernels defined in (\ref{thekernel}). Then, $\left\lbrace Q^n \right\rbrace$ converges weakly in $\mathcal{C}([0,1])$, as $n\to\infty$, to a law $Q$, that has the same finite-dimensional distributions as the Gaussian process $X$. 
\end{theorem}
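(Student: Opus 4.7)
The plan is to follow the classical two-step scheme of Delgado and Jolis \cite{delgado2000weak}: (a) prove convergence of the finite-dimensional distributions of $Y^n$ to those of $X$, and (b) establish tightness of the laws $\{Q^n\}$ in $\mathcal{C}([0,1])$. Given hypothesis $(H)$, once these two facts are combined, the weak limit $Q$ automatically has the claimed covariance structure.

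For the finite-dimensional distributions I would invoke the Cramér--Wold device: fixing $t_1,\dots,t_d\in[0,1]$ and $\lambda_1,\dots,\lambda_d\in\R$, the linear combination $\sum_{i=1}^d\lambda_i Y^n_{t_i}$ equals $\int_0^1 f(x)\theta_n(x)\,dx$ with $f(x):=\sum_i\lambda_i K(t_i,x)\in L^2([0,1])$. Thus the task reduces to showing that for every $f\in L^2([0,1])$,
\[
\int_0^1 f(x)\theta_n(x)\,dx \xrightarrow{d} \mathcal{N}\Bigl(0,\int_0^1 f(x)^2\,dx\Bigr).
\]
Since, by \eqref{bmcon}, the processes $x_n(t)=\int_0^t\theta_n(u)du$ converge a.s.\ (hence in distribution) to a Brownian motion $B$, the result holds for $f$ equal to an indicator $\mathbbm{1}_{[0,t]}$ and, by linearity, for every step function. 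The extension to arbitrary $f\in L^2$ then follows by a density argument, provided we have a uniform bound of the form $\E\bigl[\bigl(\int_0^1 f(x)\theta_n(x)dx\bigr)^2\bigr]\leq C\|f\|_{L^2}^2$; this last bound is in turn obtained from a direct computation of $\E[\theta_n(x)\theta_n(y)]$, where hypothesis $(G)$ provides exponential decay of the correlation in $|x-y|/\beta(n)$, together with the normalising factor $G(n)$.

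For tightness I would use the Billingsley moment criterion applied to the increments $Y^n_t-Y^n_s=\int_0^1\bigl(K(t,x)-K(s,x)\bigr)\theta_n(x)\,dx$. The goal is an estimate of the form
\[
\E\bigl[(Y^n_t-Y^n_s)^{2p}\bigr]\leq C_p\Bigl(\int_0^1(K(t,x)-K(s,x))^2dx\Bigr)^p\leq C_p(G(t)-G(s))^{\alpha p},
\]
for $p$ sufficiently large that $\alpha p>1$. Expanding the $2p$-th moment gives a $2p$-fold integral of $\E\bigl[\theta_n(x_1)\cdots\theta_n(x_{2p})\bigr]$; this expectation reduces to $G(n)^{-2p}\E\bigl[(-1)^{T(x_1/\beta(n))+\cdots+T(x_{2p}/\beta(n))}\bigr]$. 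Ordering the $x_i$ and using the independence of the Bernoulli marks together with hypothesis $(G)$, one factorises the expectation across consecutive pairs and recovers, for each pair $(x_{2j-1},x_{2j})$, a factor bounded by a decaying function of $(x_{2j}-x_{2j-1})/\beta(n)$, whose integral against $G(n)^{-2}$ is $O(1)$. This is the Gaussian-like pairing structure that mimics Wick's formula in the limit and produces the required $\|K(t,\cdot)-K(s,\cdot)\|_{L^2}^{2p}$ bound.

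The main obstacle is the moment estimate in the previous paragraph: without the memoryless property of the Poisson process the renewal structure makes the joint law of $\bigl(T(x_1/\beta(n)),\dots,T(x_{2p}/\beta(n))\bigr)$ considerably harder to control. Here hypothesis $(G)$ is essential, since the lower bound $r\geq\lambda$ on the failure rate guarantees that the probability of having no renewal in an interval $[a,b]$ decays at least as $e^{-\lambda(b-a)}$, which is the quantitative ingredient that replaces the exponential inter-arrival law and allows the pairing/factorisation argument to go through uniformly in $n$. Once both (a) and (b) are established, Prokhorov's theorem identifies the weak limit $Q$ as the unique law on $\mathcal{C}([0,1])$ with the finite-dimensional distributions of $X$.
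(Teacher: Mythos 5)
Your overall strategy coincides with the paper's: both reduce the theorem to the moment bound \eqref{moment condition} (your tightness estimate and the uniform $L^2$ bound in your density argument are exactly the cases $m=2p$ and $m=2$ of that inequality), both identify the limiting finite-dimensional distributions through the a.s.\ convergence of $x_n$ to Brownian motion, and both use the symmetry and independence of the Bernoulli marks $\eta_k$ to reduce $\E\left[\theta_n(x_1)\cdots\theta_n(x_m)\right]$, for ordered $x_1\leq\cdots\leq x_m$, to $G(n)^{-m}$ times the probability that $L$ performs no renewal in $\bigcup_{j}\bigl(x_{2j-1}/\beta(n),\,x_{2j}/\beta(n)\bigr]$ (this is the content of Lemma \ref{primers pas}).

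The one step you assert but do not justify is the factorisation of this joint no-renewal probability ``across consecutive pairs''. A renewal process does not have independent increments, so the events $\bigl\{L(x_{2j}/\beta(n))-L(x_{2j-1}/\beta(n))=0\bigr\}$, $j=1,\dots,m/2$, are in general dependent, and your single-interval estimate $e^{-\lambda(b-a)}$ does not by itself yield a product bound over several disjoint intervals. This is exactly where the paper does its real work: by the thinning theorem (Theorem 2 in \cite{miller}), hypothesis $(G)$ with $r\geq\lambda$ allows one to construct a Poisson process $N$ of intensity $\lambda$ whose jump times are a.s.\ a subset of the renewal times of $L$; hence $\{L\text{ has no jump in }I\}\subseteq\{N\text{ has no jump in }I\}$ for every interval $I$, the joint probability is dominated by the corresponding Poisson probability, and the latter \emph{does} factorise into $\prod_j e^{-\lambda(x_{2j}-x_{2j-1})/\beta(n)}$. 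One could instead iterate the conditional residual-lifetime bound $\Pro\{U>t+s\mid U>t\}\leq e^{-\lambda s}$ along the successive intervals, but some such coupling or conditioning argument is indispensable; as written, the factorisation claim is a gap. Everything downstream of that point (the Delgado--Jolis computations giving \eqref{moment condition}, the Billingsley criterion with $\alpha p>1$, and the identification of the limit) is in order and matches the paper.
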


\begin{rem} This family of processes includes the fractional Brownian motion (fBm) because if we consider the kernel
$K(t,s)=K^H(t,r)\mathbbm{1}_{[0,t]}(r)$ where {\small
$$K^H(t,r)=C_H(t-r)^{H-\frac12}+C_H\left(\frac12-H\right)\int_r^t(u-r)^{H-\frac32}\left(1-\left(\frac{r}{u}\right)^{\frac12-H}\right)du,$$
}for a certain constant $C_H$, then the fBm admits the
representation
$$B^H=\{B_t^H=\int_0^1K(t,r)dW_r,\,t\in[0,1]\},$$
and it is easy to see that the kernel $K$ satisfies the two previous
hypothesis.
\end{rem}

To prove \teoref{teo1}, we will show that the sequence $Y^n$ is tight and that the finite dimensional distributions converge to those of $X$. In order to check both conditions, as discussed in \cite{delgado2000weak}, it is sufficient to show that
\begin{equation}\label{moment condition}
    \sup_n \E\left[\left|\int_0^1f(x)\theta_n(x)dx\right|^m\right]\leq C_m\left(\int_0^1f^2(x)dx\right)^{\frac{m}{2}},
\end{equation}
for some constants $m > 2$, $C_m > 0$ and for every $f \in L^2([0,1])$ and that the finite dimensional distributions of the sequence $x_n$ defined in \eqref{bmcon} converge to those of the Brownian motion. The latter is a consequence of the fact that the sequence $x_n$ converges to the Brownian motion in the strong sense (in $\mathcal{C}([0,1])$), which in turn implies weak convergence and thus, weak convergence of the finite dimensional distributions. Hence, the only thing left to do is to show \eqref{moment condition}. A first step towards that direction is given by Lemma \ref{primers pas} below. 

\begin{lemma}\label{primers pas}
    For any even $m \in \mathbb{N}$ and $f \in L^2([0,1])$, we have that
    \begin{align}\label{fita moments renewal}
        &\E\left[ \left( \int_0^1 f(x)\theta_n(x)dx \right)^m \right] \nonumber \\
        &= \frac{m!}{G^m(n)} \int_{[0,1]^m} \left(\prod_{j=1}^m f(x_j) \right) \Pro\left\{ L\left(\frac{x_{2j}}{\beta(n)} \right) -  L\left(\frac{x_{2j-1}}{\beta(n)} \right) = 0 \colon j=1,...,\frac{m}{2} \right\} \I_{\{x_1 \leq ... \leq x_m\}}dx_1 ... dx_m.
    \end{align}
\end{lemma}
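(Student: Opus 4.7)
The plan is to unfold the $m$-th power of the integral, reduce to an ordered simplex by symmetry, and then compute the expectation of $\prod_{j=1}^m (-1)^{T(x_j/\beta(n))}$ by conditioning on the renewal points and exploiting the independence of the Bernoulli rewards $\eta_k$.

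First I would use Fubini (the integrands are bounded so there are no measurability issues) to write
\[
\E\!\left[\left(\int_0^1 f(x)\theta_n(x)dx\right)^{\!m}\right]=\frac{1}{G^m(n)}\int_{[0,1]^m}\!\left(\prod_{j=1}^m f(x_j)\right)\E\!\left[\prod_{j=1}^m(-1)^{T(x_j/\beta(n))}\right]dx_1\cdots dx_m.
\]
The integrand on the right is a symmetric function of $(x_1,\ldots,x_m)$, so I can restrict the integration to the ordered region $\{x_1\leq\cdots\leq x_m\}$ and pay a factor of $m!$. It then remains to evaluate the expectation on the ordered region.

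The key computation is the following. Writing $V_k:=(-1)^{\eta_k}\in\{-1,1\}$ (so that the $V_k$'s are i.i.d.\ with $\E[V_k]=0$, $\E[V_k^2]=1$, and independent of the renewal sequence), and using that $T(t)=\eta_0+\sum_{k\geq1}\eta_k\I_{[0,t]}(S_k)$, I get
\[
(-1)^{T(t)}=V_0\prod_{k\geq 1}V_k^{\,\I_{[0,t]}(S_k)}.
\]
Setting $t_j:=x_j/\beta(n)$ and multiplying over $j=1,\ldots,m$ gives
\[
\prod_{j=1}^m(-1)^{T(t_j)}=V_0^m\prod_{k\geq 1}V_k^{\,N_k},\qquad N_k:=\#\{j:S_k\leq t_j\}.
\]
Conditioning on the renewal sequence $\{S_k\}$ and using that the $V_k$ are independent of it, each factor $\E[V_k^{N_k}\mid\{S_k\}]$ equals $1$ if $N_k$ is even and $0$ otherwise; since $m$ is even, the $V_0^m$ factor contributes $1$.

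Now, on the ordered region $t_1\leq\cdots\leq t_m$, the count $N_k$ equals $m-j$ when $t_j<S_k\leq t_{j+1}$ (with the natural conventions $t_0=0$, $t_{m+1}=\infty$). Since $m$ is even, $N_k$ is odd precisely when $S_k$ lies in one of the intervals $(t_{2j-1},t_{2j}]$ for some $j=1,\ldots,m/2$. Hence the conditional expectation is $1$ if no renewal point falls in any of these pairing intervals and $0$ otherwise, giving
\[
\E\!\left[\prod_{j=1}^m(-1)^{T(t_j)}\right]=\Pro\!\left\{L(t_{2j})-L(t_{2j-1})=0,\;j=1,\ldots,m/2\right\}.
\]
Substituting this back and restoring the variables $x_j=\beta(n)t_j$ yields the claimed identity (\ref{fita moments renewal}).

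There is no serious obstacle: the only subtle point is the combinatorial book-keeping of the parity of $N_k$ on the ordered simplex, which pairs indices $(2j-1,2j)$ and explains the alternating structure of the probability that survives in the expectation. Everything else is symmetrisation and a conditioning argument using the independence of $\{\eta_k\}$ from the renewal sequence $\{S_k\}$.
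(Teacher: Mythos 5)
Your proof is correct and follows essentially the same route as the paper: symmetrise to the ordered simplex, exploit the independence of the Bernoulli rewards from the renewal times, and observe that on the ordered region the only surviving contribution is the event that no renewal falls in any interval $\left(\frac{x_{2j-1}}{\beta(n)},\frac{x_{2j}}{\beta(n)}\right]$. The only (harmless) organisational difference is that you condition on the whole renewal sequence and factorise $\prod_k \E[V_k^{N_k}]$, whereas the paper conditions on the event that some renewal lands in the union of these intervals and uses that a binomial variable with success probability $1/2$ is even with probability $1/2$.
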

\begin{proof}
    We first start by observing that
    \begin{align*}
        &\E\left[ \left( \int_0^1 f(x)\theta_n(x)dx \right)^m \right] =\frac{1}{G^m(n)} \int_{[0,1]^m} \left( \prod_{j=1}^m f(x_j) \right) \E\left[ (-1)^{\sum_{j=1}^m T_n(x_j)} \right] dx_1 ... dx_m \\
        &=\frac{m!}{G^m(n)} \int_{[0,1]^m} \left( \prod_{j=1}^m f(x_j) \right) \E\left[ (-1)^{\sum_{j=1}^m T_n(x_j)} \right] \I_{\{x_1 \leq ... \leq x_m\}} dx_1 ... dx_m \\
        &=\frac{m!}{G^m(n)} \int_{[0,1]^m} \left( \prod_{j=1}^m f(x_j) \right) \E\left[ (-1)^{\sum_{j=1}^{m/2} \left(T_n(x_{2j}) - T_n(x_{2j-1})\right)} \right] \I_{\{x_1 \leq ... \leq x_m\}} dx_1 ... dx_m,
    \end{align*}
    where we have used that $T$ is integer-valued.\\
    For $0 \leq x_1 \leq ... \leq x_m$, we can write
    \begin{equation*}
        \sum_{j=1}^{m/2} \left(T_n(x_{2j}) - T_n(x_{2j-1}) \right) = \sum_{k=1}^\infty \eta_k \I_{\bigcup_{j=1}^{m/2}\left(\frac{x_{2j-1}}{\beta(n)}, \frac{x_{2j}}{\beta(n)} \right]}(S_k),
    \end{equation*}
    which (almost surely) contains a finite number of summands since $S_k \uparrow \infty$ almost surely. Hence, by letting
    \begin{equation*}
        A = \left\{ \sum_{k=1}^\infty \I_{\bigcup_{j=1}^{m/2}\left(\frac{x_{2j-1}}{\beta(n)}, \frac{x_{2j}}{\beta(n)} \right]}(S_k) > 0  \right\},
    \end{equation*}
    we see that the events
    \begin{equation}\label{conjunts quasi iguals}
        A \quad \text{and} \quad A \cap \left\{ \sum_{k=1}^\infty \I_{\bigcup_{j=1}^{m/2}\left(\frac{x_{2j-1}}{\beta(n)}, \frac{x_{2j}}{\beta(n)} \right]}(S_k) <  \infty  \right\}
    \end{equation}
    are equal up to a null set. Now let $\Pro_A(\cdot) = \Pro(\cdot | A)$, we then have that
    \begin{align*}
        &\E\left[ (-1)^{\sum_{j=1}^{m/2} \left(T_n(x_{2j}) - T_n(x_{2j-1})\right)} \Big| A \right]\\
        &= \Pro_A\left\{ \sum_{j=1}^{m/2} \left(T_n(x_{2j}) - T_n(x_{2j-1})\right) \text{ is even} \right\} - \Pro_A\left\{ \sum_{j=1}^{m/2} \left(T_n(x_{2j}) - T_n(x_{2j-1})\right) \text{ is odd} \right\}.
    \end{align*}
    On one hand,
    \begin{align*}
        &\Pro_A\left\{ \sum_{j=1}^{m/2} \left(T_n(x_{2j}) - T_n(x_{2j-1})\right) \text{ is even} \right\}\\
        &= \sum_{i=1}^\infty \Pro_A\left\{ \sum_{j=1}^{m/2} \left(T_n(x_{2j}) - T_n(x_{2j-1})\right) \text{ is even}, \sum_{k=1}^\infty \I_{\bigcup_{j=1}^{m/2}\left(\frac{x_{2j-1}}{\beta(n)}, \frac{x_{2j}}{\beta(n)} \right]}(S_k) = i \right\} \\
        &= \sum_{i=1}^\infty \sum_{k_1 < ... < k_i} \Pro_A\left\{ \eta_{k_1}+...+\eta_{k_i} \text{ is even}, S_{k_1},...,S_{k_i} \in \bigcup_{j=1}^{m/2}\left(\frac{x_{2j-1}}{\beta(n)}, \frac{x_{2j}}{\beta(n)} \right] \right\} \\
        &= \sum_{i=1}^\infty \sum_{k_1 < ... < k_i} \Pro_A\left\{ \eta_{k_1}+...+\eta_{k_i} \text{ is even} \right\}\Pro_A\left\{ S_{k_1},...,S_{k_i} \in \bigcup_{j=1}^{m/2}\left(\frac{x_{2j-1}}{\beta(n)}, \frac{x_{2j}}{\beta(n)} \right] \right\} \\
        &= \frac{1}{2}\sum_{i=1}^\infty \sum_{k_1 < ... < k_i} \Pro_A\left\{ S_{k_1},...,S_{k_i} \in \bigcup_{j=1}^{m/2}\left(\frac{x_{2j-1}}{\beta(n)}, \frac{x_{2j}}{\beta(n)} \right] \right\} \\
        &= \frac{1}{2}\sum_{i=1}^\infty  \Pro_A\left\{ \sum_{k=1}^\infty \I_{\bigcup_{j=1}^{m/2}\left(\frac{x_{2j-1}}{\beta(n)}, \frac{x_{2j}}{\beta(n)} \right]}(S_k) = i \right\} \\
        &= \frac{1}{2}.
    \end{align*}
    In the second equality the fact that the sets in \eqref{conjunts quasi iguals} are the same up to a null set is being used (so that only a finite number of $S_{k_j}$ and hence, of $\eta_{k_j}$, appear), in the third one we have used that the sequences $\{\eta_k\}_k$ and $\{U_k\}_k$ are independent, in the fourth one the fact that the probability of a binomial random variable with probability of success $1/2$ being even is $1/2$ (no matter how many trials are performed) is being used and in the last step we use that  
    \begin{equation*}
        \sum_{i=1}^\infty \Pro_A\left\{ \sum_{k=1}^\infty \I_{\bigcup_{j=1}^{m/2}\left(\frac{x_{2j-1}}{\beta(n)}, \frac{x_{2j}}{\beta(n)} \right]}(S_k) = i \right\} = 1.
    \end{equation*}
    Similarly, we see that
    \begin{equation*}
        \Pro_A\left\{ \sum_{j=1}^{m/2} \left(T_n(x_{2j}) - T_n(x_{2j-1})\right) \text{ is odd} \right\} = \frac{1}{2},
    \end{equation*}
    so that $\E\left[ (-1)^{\sum_{j=1}^{m/2} \left(T_n(x_{2j}) - T_n(x_{2j-1})\right)} \Big| A \right] = 0$. Finally, using that the renewal process $L$ is non-decreasing,
    \begin{align*}
        \E\left[ (-1)^{\sum_{j=1}^{m/2} \left(T_n(x_{2j}) - T_n(x_{2j-1})\right)} \right] 
        &= \E\left[ (-1)^{\sum_{j=1}^{m/2} \left(T_n(x_{2j}) - T_n(x_{2j-1})\right)} \Big| A^c \right] \Pro(A^c) \\
        &= \Pro\left\{ \sum_{k=1}^\infty \I_{\bigcup_{j=1}^{m/2}\left(\frac{x_{2j-1}}{\beta(n)}, \frac{x_{2j}}{\beta(n)} \right]}(S_k) = 0 \right\} \\
        &= \Pro\left\{ \sum_{j=1}^{m/2} \left(L\left( \frac{x_{2j}}{\beta(n)} \right) - L\left( \frac{x_{2j-1}}{\beta(n)} \right) \right) =0 \right\} \\
        &= \Pro\left\{ L\left(\frac{x_{2j}}{\beta(n)} \right) -  L\left(\frac{x_{2j-1}}{\beta(n)} \right) = 0 \colon j=1,...,\frac{m}{2} \right\},
    \end{align*}
    which finishes the proof of the lemma.
\end{proof}

Without any further assumptions on the renewal process $L$ or on the sequence of interarrival times $\{U_k\}_k$ we cannot proceed any further. When the random variables $\{U_k\}_k$ are exponentially distributed, $L$ becomes a Poisson process and one can proceed as in \cite{delgado2000weak} to obtain \eqref{moment condition}. The same can be said when the interarrival times are geometric random variables of parameter $p \in (0,1)$. In this case, it can be shown (see \cite{Samimi2013ANA}\footnote{For the sake of completeness and clarity, and due to the difficulty of accessing the original sources, we sketch the proofs in the Appendix.}) that $L$ has independent increments which are distributed as
\begin{equation*}
    \Pro\{  L(t) - L(s) = j \} = \binom{[t] - [s]}{j}p^j (1-p)^{[t]-[s]-j}, \quad j \in \{0,1,..,[t]-[s]\},
\end{equation*}
for $0 \leq s \leq t$ (being $[\cdot]$ the integer part function). In particular,
\begin{equation*}
    \Pro\{  L(t) - L(s) = 0 \} = (1-p)^{[t]-[s]} \leq \frac{1}{1-p} e^{-a(t-s)}, \quad a = -\log(1-p),
\end{equation*}
so that similar computations as the ones shown in \cite{delgado2000weak} lead to bounds like \eqref{moment condition}.

In general, we cannot expect $L$ to have independent increments nor that the probability of them being 0 to decay exponentially. In the following lemma, we see that the requirement of $U_1$ having a failure rate function bounded away from 0 is sufficient in order to ensure that the probability in \eqref{fita moments renewal} decays exponentially, allowing us to obtain the bounds in \eqref{moment condition}.

\begin{lemma}\label{lema fita per poisson}
    Assume that $U_1$ has cumulative distribution function $F$  satisfying hypothesis $(G)$. 
   Then \eqref{moment condition} holds for any even $m \in \mathbb{N}$.
\end{lemma}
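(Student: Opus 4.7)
The plan is to use Lemma~\ref{primers pas} to reduce \eqref{moment condition} to an estimate on the integral representation \eqref{fita moments renewal}, which in turn splits naturally into two pieces: (i) bounding the joint probability that $L$ has no renewals in each of the sub-intervals $(x_{2j-1}/\beta(n), x_{2j}/\beta(n)]$ by a product of exponentials, and (ii) integrating this bound against $\prod_j |f(x_{2j-1}) f(x_{2j})|$ over the simplex $\{x_1 \leq \dots \leq x_m\}$ and comparing with the prefactor $m!/G^m(n)$.

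For (i), the key one-step estimate is the following. Given $\mathcal{F}_a$, the $\sigma$-algebra generated by the renewal process up to time $a$, write $\tau = a - S_{L(a)}$ for the age at time $a$. The conditional distribution of $U_{L(a)+1}$ given $\mathcal{F}_a$ is that of $U_1$ truncated to $\{U_1 > \tau\}$, so under hypothesis $(G)$:
\[
    \Pro\{L(b) - L(a) = 0 \mid \mathcal{F}_a\} = \exp\left(-\int_{\tau}^{\tau+(b-a)} r(s)\,ds\right) \leq e^{-\lambda(b-a)}.
\]
Iterating this bound from the outermost interval $k=m/2$ down to $k=1$, and using that on the simplex the event $\{L(x_{2i}/\beta(n)) - L(x_{2i-1}/\beta(n)) = 0,\, i<k\}$ is $\mathcal{F}_{x_{2k-1}/\beta(n)}$-measurable (because $x_{2k-2} \leq x_{2k-1}$), yields
\[
    \Pro\left\{L(x_{2j}/\beta(n)) - L(x_{2j-1}/\beta(n)) = 0 \colon j=1,\ldots,m/2\right\} \leq \exp\left(-\frac{\lambda}{\beta(n)} \sum_{j=1}^{m/2} (x_{2j} - x_{2j-1})\right).
\]

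For (ii), I would substitute this estimate into \eqref{fita moments renewal}, replace each $f(x_j)$ by $|f(x_j)|$, and enlarge the domain from $\{x_1 \leq \dots \leq x_m\}$ to $\bigcap_j\{x_{2j-1} \leq x_{2j}\}$; since the integrand is non-negative, this can only increase the integral. On the enlarged domain both the integrand and the region are products over pairs, so the integral factors into $m/2$ identical factors of the form
\[
    \int_0^1\int_{x_{2j-1}}^1 |f(x_{2j-1}) f(x_{2j})|\, e^{-\lambda(x_{2j}-x_{2j-1})/\beta(n)}\, dx_{2j}\, dx_{2j-1} = \int_0^1 e^{-\lambda u/\beta(n)} \int_0^{1-u} |f(a) f(a+u)|\, da\, du,
\]
after the change of variables $u = x_{2j}-x_{2j-1}$, $a = x_{2j-1}$. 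Cauchy–Schwarz bounds the inner integral by $\|f\|_2^2$, while the outer integral in $u$ is at most $\beta(n)/\lambda$. Multiplying the $m/2$ factors by the prefactor $m!/G^m(n) = m!\left(\E[U_1]/(\beta(n)\E[U_1^2])\right)^{m/2}$ cancels the $\beta(n)$-dependence and produces \eqref{moment condition} with $C_m = m!\left(\E[U_1]/(\lambda\E[U_1^2])\right)^{m/2}$, independent of $n$.

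The main obstacle is step (i): for a general renewal process $L$ need not have independent increments, so one cannot simply multiply marginal probabilities as in the Poisson case treated in \cite{delgado2000weak}. Hypothesis $(G)$ is precisely what compensates for this absence of independence, because the uniform lower bound $r(s) \geq \lambda$ on the hazard function gives a uniform upper bound on the survival past each interval regardless of the age at its left endpoint; this is what makes the inductive telescoping argument go through and produces the clean product of exponentials required by step (ii).
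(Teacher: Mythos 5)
Your proof is correct, but it reaches the key intermediate inequality by a different mechanism than the paper. The paper invokes Theorem 2 of Miller's thinning result: under hypothesis $(G)$ one can couple the renewal process with a Poisson process $N$ of intensity $\lambda$ whose arrival times form a subset of $\{S_k\}$, so that $\{L \text{ has no renewal in } (a,b]\} \subseteq \{N \text{ has no jump in } (a,b]\}$; the joint no-jump probability is then bounded by the corresponding Poisson probability, which factorizes by independence of increments into the same product $\prod_j \exp\{-\lambda(x_{2j}-x_{2j-1})/\beta(n)\}$ that you obtain. You instead derive this product directly from the hazard-rate bound via the Markov property of the age process, telescoping with the tower property from the outermost interval inward; this is more self-contained (no external coupling theorem needed) and makes explicit exactly where $r(s)\geq\lambda$ enters, at the cost of having to justify the conditional law of the residual lifetime given $\mathcal{F}_a$ (which is standard for renewal processes and which you state correctly). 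You also carry out the final integration step in full — enlarging the simplex to $\bigcap_j\{x_{2j-1}\leq x_{2j}\}$, factoring, changing variables, and applying Cauchy--Schwarz — whereas the paper delegates this to the computations in Delgado--Jolis; your explicit constant $C_m = m!\left(\E[U_1]/(\lambda\E[U_1^2])\right)^{m/2}$ is consistent with what that route yields. Both arguments are sound; the paper's is shorter given the citation, yours is more elementary and transparent about the role of hypothesis $(G)$.
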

\begin{proof}
    By Theorem 2 in \cite{miller}, one can construct a Poisson process $N$ with intensity $\lambda$ whose arrival times are obtained by thinning $\{S_0,S_1,S_2,...\}$. More precisely, there is a Poisson process of intensity $\lambda$, $N$, such that
    \begin{equation*}
        \{T_0, T_1, ...\} \subset \{S_0, S_1, ...\}, \quad T_i = \inf\{t > 0 \colon N(t) \geq i \}, 
    \end{equation*}
    almost surely. This in particular implies that $L$ is bounded below by $N$ with probability 1 and thus, that if $L$ has not performed any jumps on $(s,t]$, $0 \leq s \leq t$, then $N$ has not performed any jumps on $(s,t]$ as well. Bearing this in mind, the probability in \eqref{fita moments renewal} can be bounded above by
    \begin{align*}
        \Pro\left\{ N\left(\frac{x_{2j}}{\beta(n)} \right) -  N\left(\frac{x_{2j-1}}{\beta(n)} \right) = 0 \colon j=1,...,\frac{m}{2} \right\} &= \prod_{j=1}^{m/2} \Pro\left\{ N\left(\frac{x_{2j}}{\beta(n)} \right) -  N\left(\frac{x_{2j-1}}{\beta(n)} \right) = 0 \right\} \\
        &= \prod_{j=1}^{m/2} \exp\left\{ -\frac{x_{2j} - x_{2j-1}}{\beta(n)}  \right\},
    \end{align*}
    where in the last line we have used that the Poisson process has independent increments. From here, the same computations as in \cite{delgado2000weak} lead to the desired results.
\end{proof}

\begin{rem}
    If, for instance, $U_1$ has the same law as the absolute value of a standard Gaussian random variable, then its density and cumulative distribution functions are given by, respectively, $f(t) = 2\phi(t)$ and $F(t) = 2\Phi(t) - 1$ for $t \geq 0$ and where $\phi$ and $\Phi$ are the density and the cumulative distribution functions of a standard Gaussian variable, respectively. It follows from  \cite[Formula 7.1.13, p.298]{abramowitz1968handbook} that
    \begin{equation*}
        r(t) \geq t + \sqrt{t^2 + \frac{8}{\pi}} \geq 2\sqrt{\frac{2}{\pi}} > 0.
    \end{equation*}
    Implying that such $U_1$ verifies the hypotheses of Lemma \ref{lema fita per poisson}.

    The same can be said if $U_1$ is uniformly distributed over $[0,1]$ since, in this case,
    \begin{equation*}
        r(t) = \frac{1}{1-t}, \quad t \in [0,1)
    \end{equation*}
    and $r(t) = +\infty$ for $t \geq 1$. In this case, we have $r(t) \geq 1$ for all $t \geq 0.$

    Another example can be given by assuming that $U_1$ is distributed as a gamma random variable of parameters $\alpha \in (0,1)$ and $\lambda > 0$. More precisely, if $U_1$ is absolutely continuous with density function
    \begin{equation*}
        f(x) = \frac{\lambda^{\alpha}}{\Gamma(\alpha)}x^{\alpha - 1} e^{-\lambda x}\I_{(0,\infty)}(x),
    \end{equation*}
    where $\Gamma$ denotes the Euler gamma function. In this case we have that
    \begin{equation*}
        r(t) = \frac{f(t)}{S(t)}, \quad S(t) = \int_t^\infty f(x)dx, \quad  t > 0,
    \end{equation*}
    and $r(0) = +\infty$. It is easy to show that
    \begin{equation*}
        \lim_{t \to \infty} r(t) = \lambda > 0,
    \end{equation*}
    hence, if we show that $r(t)$ is strictly decreasing in $(0,\infty)$, we will have that $r(t) > \lambda $ for each $t \in [0,\infty)$. A direct computation shows that, for $t > 0$,
    \begin{equation*}
        r'(t) = \frac{f(t)}{S^2(t)}\left[ \left(\frac{\alpha - 1}{t} - \lambda\right)S(t) + f(t) \right].
    \end{equation*}
    Thus, $r'(t) < 0$ if, and only if, 
    \begin{equation*}
        G(t) = \frac{t f(t)}{\lambda t - \alpha + 1} - S(t) < 0
    \end{equation*}
    for all $t \geq 0$. But the latter is a consequence of the fact that $G(0) = -1 < 0$, that
    \begin{equation*}
        G'(t) = (1-\alpha)\frac{2f(t)}{(\lambda t - \alpha + 1)^2} > 0
    \end{equation*}
    and that $\lim_{t \to \infty} G(t) = 0$. 
\end{rem}

\section{Convergence to multiple integrals}

In this section, we show that our approximating processes satisfy the conditions established in \cite{bardina2000weak} to ensure weak convergence towards multiple Stratonovich integrals of functions of the form $f(t_1,\ldots,t_n)= f_1(t_1)\cdots f_n(t_n)1_{\{t_1<t_2<\cdots<t_n\}}$, with $f_i\in L^2([0,T])$ for any $i=1,\ldots,n$.

The theorem states as follows:

\begin{theorem}\label{teo2}
    Let $f_i\in L^2([0,1])$ for all $i\in\left\lbrace 1,\ldots, l \right\rbrace$ and define
\[
\begin{matrix*}[l]
    \displaystyle Y_1^n(t) &=& \displaystyle\int_0^t f_1(x)\theta_n(x)dx,~~\text{and for }k\in \lbrace 2,\ldots, l \rbrace,\\ \\
    \displaystyle Y_k^n(t) &=& \displaystyle\int_0^t f_k(x)Y_{k-1}^n(x)\theta_n(x)dx,
\end{matrix*}
\]
where $\theta_n$ are the kernels defined in (\ref{thekernel})  and 
the distribution function of $U_1$ satisfies hypothesis $(G)$. 
Then,
\[
\mathscr{L}\left(Y_1^n,\ldots,Y_l^n\right)\xlongrightarrow[]{w}\mathscr{L}\left(Y_1,\ldots, Y_l\right)
\]
in the space $\left(\mathcal{C}_0\left([0,1]\right)\right)^l$ when $n\to\infty$, where
\[
\begin{matrix*}[l]
    \displaystyle Y_1(t) &=& \displaystyle\int_0^t f_1(x)dW_x,~~\text{and for }k\in \lbrace 2,\ldots, l \rbrace,\\ \\
    \displaystyle Y_k(t) &=& \displaystyle\int_0^t f_k(x)Y_{k-1}(x)\circ dW_x,
\end{matrix*}
\]
where the last integral is in the Stratonovich sense.
\end{theorem}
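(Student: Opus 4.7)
The plan is to verify the two classical criteria for weak convergence in the Polish product space $(\mathcal{C}_0([0,1]))^l$: tightness of the joint law of $(Y_1^n, \ldots, Y_l^n)$, and convergence of its finite-dimensional distributions to those of the multiple Stratonovich vector $(Y_1, \ldots, Y_l)$. Following the blueprint of Bardina and Jolis \cite{bardina2000weak}, both ingredients are obtained by induction on the nesting level $k$, with Lemma \ref{lema fita per poisson} serving as the base estimate that propagates through the iteration.

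For tightness I would establish, for every even integer $m$ and every $k \in \{1, \ldots, l\}$, a Kolmogorov-type bound
\[
\sup_n \E\bigl[|Y_k^n(t) - Y_k^n(s)|^m\bigr] \leq C_{k,m}\,\psi(s,t)^{m/2}, \qquad 0 \leq s \leq t \leq 1,
\]
where $\psi$ is a suitable continuous control (in the spirit of the function $G$ of hypothesis (H)), built from the $L^2$-norms of the $f_j$ on $[s,t]$. The case $k=1$ follows directly from Lemma \ref{lema fita per poisson} applied to $f_1\mathbbm{1}_{[s,t]}$. For $k \geq 2$ one unfolds the recursion to
\[
Y_k^n(t) = \int_{0 < x_1 < \cdots < x_k < t} f_1(x_1) \cdots f_k(x_k)\,\theta_n(x_1) \cdots \theta_n(x_k)\,dx_1 \cdots dx_k.
\]
Since $\theta_n^{\otimes k}$ is symmetric in its arguments, $\E[(Y_k^n(t) - Y_k^n(s))^m]$ reduces (by the same conditioning on $\eta$'s and $U$'s used in Lemma \ref{primers pas}) to a $km$-fold integral of probabilities of the form $\Pro\{L(y_{2j}/\beta(n)) - L(y_{2j-1}/\beta(n)) = 0\colon j\}$, each of which is controlled exponentially via the Poisson-thinning argument of Lemma \ref{lema fita per poisson}. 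The resulting integral is then handled by the same manipulations as in the Poisson case treated in \cite{bardina2000weak}.

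For the finite-dimensional distributions I would exploit the fact, proved in \cite{bardina2023strong}, that the processes $x_n$ of (\ref{bmcon}) converge almost surely and uniformly on $[0,1]$ to a Brownian motion $W$. Combined with the uniform moment bound \eqref{moment condition} and its iterated analogue above, this yields joint convergence in law of the linear statistics $\bigl(\int_0^{\cdot} h_j(x)\,\theta_n(x)\,dx\bigr)_j$ to the Wiener integrals $\bigl(\int_0^{\cdot} h_j\,dW\bigr)_j$ for any finite family $h_j \in L^2([0,1])$. Expanding $(Y_1^n(t_i), \ldots, Y_l^n(t_i))$ as multilinear functionals of $\theta_n$ on the simplex and invoking the standard product formula that relates iterated integrals with the Stratonovich chaos (as in \cite{bardina2000weak}) then identifies the limit as the vector of multiple Stratonovich integrals, the $\tfrac12$-trace corrections arising exactly from the diagonal contributions of $\theta_n(x)\theta_n(y)\,dx\,dy$.

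The most delicate step, and the one I expect to be the main obstacle, is the identification of the Stratonovich corrections in the limit. In the exponential (Poisson) setting of \cite{bardina2000weak} this is transparent thanks to the independence of increments and the closed form of the Poisson probabilities. Here, independence is lost for a general renewal process, and one must rely on the exponential domination obtained through Miller's thinning theorem in the proof of Lemma \ref{lema fita per poisson}. Verifying that this upper bound is sharp enough to reproduce precisely the Stratonovich traces (and not some intermediate correction, nor an excessive one) is the technical heart of the argument, and explains why hypothesis $(G)$ on the failure rate of $U_1$, rather than merely a finite-moment condition, is crucial for the statement.
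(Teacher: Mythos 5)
Your strategy coincides with the paper's: everything reduces to a moment bound on the $4m$-fold integral of $g(x_1)\cdots g(x_{4m})\left|\E\left[\theta_n(x_1)\cdots\theta_n(x_{4m})\right]\right|$ over the ordered simplex with four variables confined to $[s,t]$ (Lemma \ref{main2}), which is obtained exactly as you describe — the conditioning argument of Lemma \ref{primers pas} reduces the expectation to $\Pro\{L(x_{2j}/\beta(n))-L(x_{2j-1}/\beta(n))=0,\ j=1,\dots,2m\}$, and Miller's thinning theorem dominates this by the corresponding Poisson probability, after which the computations of \cite{delgado2000weak} apply — and the theorem then follows by invoking the general criterion of Bardina and Jolis \cite{bardina2000weak}, whose hypotheses are precisely this kernel bound together with the convergence of the processes $x_n$ of \eqref{bmcon} to Brownian motion.

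One clarification on your closing paragraph: the step you flag as the main obstacle — whether the exponential \emph{upper} bound is sharp enough to reproduce the $\tfrac12$-trace corrections — is not actually where the work lies. In the framework of \cite{bardina2000waek} the Stratonovich limit is not identified through precise diagonal asymptotics of $\E[\theta_n(x)\theta_n(y)]$; it is identified by approximating the $f_i$ by step functions, for which the iterated integrals $Y_k^n$ become ordinary polynomial functionals of increments of $x_n$ (for instance $\tfrac12\left(x_n(t)-x_n(s)\right)^2$ when $k=2$). These converge to the same polynomial functionals of $W$, which are exactly the Stratonovich iterated integrals, because $x_n\to W$ almost surely and uniformly by \cite{bardina2023strong}; the correct normalization producing the traces is therefore already encoded in the choice of $G(n)$ that makes that strong convergence hold. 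The kernel bound of Lemma \ref{main2}, with the factor $\left(\int_s^t g^2(x)dx\right)^2$ attached to the block of variables in $[s,t]$, is needed only to control the error of this step-function approximation uniformly in $n$ and to obtain tightness, so a one-sided exponential estimate on $\Pro\{L(t/\beta(n))-L(s/\beta(n))=0\}$ suffices, and hypothesis $(G)$ enters solely to furnish the Poisson minorant that yields it. (A small typographical note: I mean the reference \cite{bardina2000weak} above.)
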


\vspace{11pt}

We now prove \teoref{teo2}. Similarly to \teoref{teo1}, the proof of this theorem relies on proving an inequality similar to \eqref{moment condition}, which is the content of the following lemma.

\begin{lemma}\label{main2}
Under the same hypothesis as in Lemma \ref{lema fita per poisson}, for any non-negative $g\in L^2\left([0,1]\right)$ and any $m \in \mathbb{N}$, there exists a constant $K$, that only depends on $m$ and on the $L^2\left([0,1]\right)$-norm of the function $g$, such that, for all $0\leq s\leq t\leq 1$,
\begin{align*}
    &\int_{[s,t]^4\times [0,t]^{4(m-1)}}g(x_1)\cdots g(x_{4m})\left|\E\left[\theta_{n}(x_1)\cdots\theta_{n}(x_{4m})\right]\right|\mathbbm{1}_{\left\lbrace x_1\leq\cdots\leq x_{4m} \right\rbrace} dx_1\cdots dx_{4m}\\ &\leq K\left(\int_s^t g^2(x)dx\right)^2,
\end{align*}
where $\{\theta_{n}\}_n$ are our kernels.
\end{lemma}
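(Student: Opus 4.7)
The plan is to combine the reduction carried out in the proof of Lemma \ref{primers pas} with the exponential domination established in Lemma \ref{lema fita per poisson}, and then estimate the resulting deterministic integral by an AM-GM decoupling of the $2m$ consecutive pairs. Since $4m$ is even, the same calculation that led to \eqref{fita moments renewal} shows that, on the ordered set,
\[|\E[\theta_n(x_1)\cdots\theta_n(x_{4m})]|\,\I_{\{x_1\leq\cdots\leq x_{4m}\}}=\frac{1}{G^{4m}(n)}\,\Pro\!\left\{L\!\left(\tfrac{x_{2j}}{\beta(n)}\right)-L\!\left(\tfrac{x_{2j-1}}{\beta(n)}\right)=0\colon j=1,\ldots,2m\right\}\I_{\{x_1\leq\cdots\leq x_{4m}\}},\]
and the thinning argument of Lemma \ref{lema fita per poisson} bounds the probability above by $\prod_{j=1}^{2m}\exp\{-\lambda(x_{2j}-x_{2j-1})/\beta(n)\}$. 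Writing $c:=\lambda/\beta(n)$, the crucial algebraic identity $G^{4m}(n)\,c^{2m}=(\lambda\,\E[U_1^2]/\E[U_1])^{2m}$ is independent of $n$, so it suffices to bound
\[J:=\int \prod_{i=1}^{4m}g(x_i)\prod_{j=1}^{2m}e^{-c(x_{2j}-x_{2j-1})}\,\I_{\{x_1\leq\cdots\leq x_{4m}\}}\,dx\]
by a constant (depending only on $m$ and $\|g\|_{L^2([0,1])}$) times $c^{-2m}\bigl(\int_s^t g^2\bigr)^2$.

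To estimate $J$ I would apply AM-GM on each pair, $g(x_{2j-1})g(x_{2j})\leq\tfrac12(g^2(x_{2j-1})+g^2(x_{2j}))$, and expand the resulting product into $4^m$ summands. In each summand one of $x_{2j-1},x_{2j}$ has been chosen as $y_j$ and the other (call it the ``dummy'' $z_j$) survives only inside the exponential of that same pair, so the $2m$ dummies can be integrated one at a time by Fubini. For each dummy the one-sided integration is bounded using $\int_a^{\infty}e^{-c(z-a)}dz=\int_{-\infty}^b e^{-c(b-z)}dz=1/c$, producing an overall factor of $c^{-2m}$. The retained variables $y_1\leq y_2\leq\cdots\leq y_{2m}$ still satisfy the ordering inherited from the original chain (via $y_j\leq x_{2j}\leq x_{2j+1}\leq y_{j+1}$), and the constraints $x_1,\ldots,x_4\in[s,t]$ together with the ordering force every $y_j$ to lie in $[s,t]$. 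Hence the remaining ordered integral of $\prod_{j} g^2(y_j)$ is bounded by $\tfrac{1}{(2m)!}\bigl(\int_s^t g^2\bigr)^{2m}\leq\bigl(\int_s^t g^2\bigr)^2\|g\|_{L^2([0,1])}^{4m-4}$, which is already in the form demanded by the statement.

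Adding the $4^m$ identical bounds, absorbing the AM-GM prefactor $4^{-m}$, and multiplying by $1/G^{4m}(n)$, the cancellation $G^{-4m}(n)\,c^{-2m}=(\E[U_1]/(\lambda\E[U_1^2]))^{2m}$ yields the claim with $K=\|g\|_{L^2([0,1])}^{4m-4}\bigl(\E[U_1]/(\lambda\E[U_1^2])\bigr)^{2m}$, which depends only on $m$ and $\|g\|_{L^2([0,1])}$. The main (mostly bookkeeping) obstacle is to justify that the uniform one-sided bound $1/c$ may be used for every dummy regardless of the chosen selection pattern and without breaking the chain among the surviving reals; this boils down to the observation that each dummy appears in precisely one exponential factor and its interval of integration is always one-sided of the form $[y_{j-1},\,\cdot\,]$ or $[\,\cdot\,,y_j]$, so enlarging it to the full half-line preserves positivity of the integrand and still yields the factor $1/c$.
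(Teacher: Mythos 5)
Your proposal is correct and follows essentially the same route as the paper: reduce via the computation of Lemma~\ref{primers pas} to the probability that $L$ has no renewals on the union of the paired intervals, dominate that probability by the thinned Poisson process of intensity $\lambda$ (Theorem~2 of \cite{miller}), and then run the Delgado--Jolis exponential-kernel estimate. The only difference is that you spell out the final integral estimate (the AM--GM decoupling, the $1/c$ dummy integrations, and the cancellation of $\beta(n)$ between $G^{4m}(n)$ and $c^{-2m}$) that the paper merely cites, and those details check out.
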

\begin{proof}
    We want to show that
    \begin{align*}
        \frac{1}{G^{4m}(n)}&\int_{[s,t]^4\times [0,t]^{4(m-1)}}g(x_1)\cdots g(x_{4m})\left|\E\left[(-1)^{T_n\left(x_1\right)}\cdots(-1)^{T_n\left(x_{4m}\right)}\right]\right|\mathbbm{1}_{\left\lbrace x_1\leq\cdots\leq x_{4m} \right\rbrace}dx_1\cdots dx_{4m}\\
        &\leq  K\left(\int_s^t g^2(x)dx\right)^2.
    \end{align*}
    To show this we can proceed as in Lemma \ref{primers pas}, use Theorem 2 in \cite{miller} and follow the same computations as in \cite{delgado2000weak} again to obtain that left-hand side of the latter equation can be bounded by
    \begin{equation*}
        C_m \left(\int_s^t g^2(x)dx\right)^2\left(\int_0^t g^2(x)dx\right)^{2(m-1)} \leq C_m\left(\int_s^tg^2(x)dx\right)^2\|g\|_2^{4(m-1)},
    \end{equation*}
    for some positive constant $C_m$ which only depends on $m$. The result follows by setting $K = C_m ||g||_2^{4(m-1)}$.
\end{proof}

With this result proved, we can proceed as in the paper of Bardina and Jolis \cite{bardina2000weak} in order to achieve the proof of \teoref{teo2}. 

\section*{Appendix}

\subsection*{The renewal process with geometric interarrival times}

In this section we shall see some properties of the renewal process $L$ when the random variables $\{U_k\}_k$ are geometric with parameter $p \in (0,1)$. That is,
\begin{equation*}
    \Pro\{U_1 = k\} = pq^{k-1}, \quad k \in \mathbb{N},\quad q = 1-p.
\end{equation*}
Since $S_n$ takes values in $\{n, n+1, ...\}$ (in fact, it is well known that is distributed as a negative binomial random variable with parameters $n$ and $p$), one has that $L(t) = L([t])$ for any $t \geq 0$ and that $L(t) - L(s) \leq [t] - [s]$ for any $0 \leq s \leq t$ since the increment $L(t) - L(s)$ is at most the number of integer numbers in $(s,t]$. 

The following result determines completely the law of the increments of $L$.
\begin{lemma}
    For any $0 \leq s \leq t$ and any $m,n \in \mathbb{N}_0 = \mathbb{N}\cup\{0\}$ with $m \leq [s]$ and $n \leq [t] - [s]$, we have that
    \begin{equation*}
        \Pro\{L(s) = m, L(t)-L(s) = n\} = \binom{[s]}{m}\binom{[t]-[s]}{n} q^{[t]} \left( \frac{p}{q}\right)^{m+n}. 
    \end{equation*}
\end{lemma}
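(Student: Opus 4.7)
The plan is to reduce the claim to a standard binomial computation via the classical coupling between a geometric renewal process and an i.i.d.\ Bernoulli sequence. First I would exploit the fact that each $U_k \in \mathbb{N}$, which forces $S_n \ge n$ and hence $L(u) = L([u])$ for every $u \ge 0$; in particular $L(s) = L([s])$ and $L(t) - L(s) = L([t]) - L([s])$. It therefore suffices to prove the identity when $s$ and $t$ are themselves nonnegative integers.

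For integer times, the key step is the following coupling. Define $X_i := \I_{\{i \in \{S_1, S_2, \ldots\}\}}$ for $i \ge 1$. Because the $U_k$ are i.i.d.\ geometric$(p)$ supported on $\mathbb{N}$, a short induction shows that $\{X_i\}_{i \ge 1}$ is an i.i.d.\ sequence of Bernoulli$(p)$ variables, and $L(k) = \sum_{i=1}^k X_i$ for every integer $k \ge 0$. Equivalently, one can start from an i.i.d.\ Bernoulli$(p)$ sequence and recover the $U_k$ as the successive inter-success waiting times; the two constructions produce the same joint law.

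Once the coupling is in place, $L([s])$ and $L([t]) - L([s])$ become sums over the disjoint index sets $\{1, \ldots, [s]\}$ and $\{[s]+1, \ldots, [t]\}$ of independent Bernoullis, hence independent binomials with parameters $([s], p)$ and $([t]-[s], p)$ respectively. Their joint probability is therefore the product of the marginals,
\[
\binom{[s]}{m} p^m q^{[s]-m} \cdot \binom{[t]-[s]}{n} p^n q^{[t]-[s]-n},
\]
and collecting the powers of $p$ and $q$ rewrites this as $\binom{[s]}{m}\binom{[t]-[s]}{n}\, q^{[t]} (p/q)^{m+n}$, which is the stated expression. The only mildly subtle point is the Bernoulli coupling itself, but it is a classical fact; everything else is bookkeeping of exponents, so I do not anticipate any real obstacle.
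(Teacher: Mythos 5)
Your argument is correct, but it takes a genuinely different route from the paper. The paper proves the identity by brute force: it splits into seven cases according to the values of $(m,n)$, writes the event $\{S_m \leq s < S_{m+1} \leq S_{m+n} \leq t < S_{m+n+1}\}$ in terms of independent negative binomial and geometric gaps, and evaluates the resulting multiple sum using hockey-stick identities for binomial coefficients. You instead invoke the classical coupling in which the geometric renewal process is realized as the success-counting process of an i.i.d.\ Bernoulli$(p)$ sequence, so that $L(k)=\sum_{i=1}^{k}X_i$ for integer $k$; after the (correct) reduction $L(u)=L([u])$, the statement becomes the independence and binomial distribution of sums of Bernoullis over the disjoint blocks $\{1,\dots,[s]\}$ and $\{[s]+1,\dots,[t]\}$, and the exponent bookkeeping $p^{m+n}q^{[t]-m-n}=q^{[t]}(p/q)^{m+n}$ matches the stated formula. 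Your approach is shorter, avoids the case analysis entirely, and moreover delivers for free the independence of increments of $L$, which the paper establishes separately in the subsequent proposition via a renewal-splitting argument; the only point you should make explicit if writing this up is the standard verification that the inter-success waiting times of a Bernoulli sequence are i.i.d.\ geometric (equivalently, that the indicators $X_i$ are i.i.d.\ Bernoulli$(p)$), which is the classical fact your whole reduction rests on. The paper's computation, while heavier, has the minor virtue of being entirely self-contained from the negative binomial law of $S_k$.
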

\begin{proof}
    One has to consider several cases depending on the values of $m$ and $n$. More precisely, the cases where $(m,n)=(0,0)$, $(m,n) = (0,1)$, $(m,n)=(1,0)$, $(m,n) = (0,n)$ with $n\geq 2$, $(m,n)=(m,0)$ with $m \geq 1$, $(m,n)=(m,1)$ with $m \geq 1$ and, lastly, the case $m \geq 1$ and $n \geq 2$ have to be considered separately. We will only detail the computations for the latter, that is, we will be assuming that $m$ and $n$ are such that
    \begin{equation*}
        1\leq m \leq [s], \quad 2 \leq n, \quad m+n \leq [t].
    \end{equation*}
    We have that
    \begin{equation*}
        \Pro\{L(s) = m, L(t)-L(s) = n\} = \Pro\{L(s) = m, L(t) = n + m\} = \Pro\{S_m \leq s < S_{m+1} \leq S_{m+n} \leq t < S_{m+n+1}\}.
    \end{equation*}
    Let us define the random variables,
    \begin{align*}
        X_1 &= S_m = Y_1,\\
        X_2 &= S_{m+1} = Y_1 + Y_2, \\
        X_3 &= S_{m+n} = Y_1 + Y_2 + Y_3, \\
        X_4 &= S_{m+n+1} = Y_1 + Y_2 + Y_3 + Y_4,
    \end{align*}
    where the random variables $Y_1,Y_2,Y_3,Y_4$ are independent with laws 
    \begin{equation*}
        Y_1 \sim \text{NB}(m,p), \quad Y_2 \sim \text{Geom}(p), \quad Y_3 \sim \text{NB}(n-1,p), \quad Y_4 \sim \text{Geom}(p),
    \end{equation*}
    (NB$(m,p)$ and Geom$(p)$ denote, respectively, a negative binomial random variable of parameters $m$ and $p$ and a geometric random variable of parameter $p$).

    The next step is to write the set 
    \begin{equation*}
        D = \left\{(x_1, x_2, x_3, x_4) \in \N^4 \colon m \leq x_1 \leq s < x_2 < x_3 \leq t < x_4 \right\}
    \end{equation*}
    as the set of points $(x_1, x_2, x_3, x_4) \in \N^4$ such that
    \begin{equation*}
        \quad m \leq x_1 \leq [s], \quad [s]+1 \leq x_2 \leq x_3-1, \quad [s]+2 \leq x_3 \leq [t],\quad [t]+1 \leq x_4.
    \end{equation*}
    With all this, we have that
    \begin{align*}
        &\P\{S_m \leq s < S_{m+1} \leq S_{m+n} \leq t < S_{m+n+1}\} = \P\{(X_1,X_2,X_3,X_4) \in D\} \\
        &= \sum_{(x_1,...,x_4) \in D} \P\{X_1 = x_1, X_2 = x_2, X_3 = x_3, X_4 = x_4\} \\
        &= \sum_{(x_1,...,x_4) \in D} \P\{Y_1 = x_1, Y_2 = x_2 - x_1, Y_3 = x_3 - x_2, Y_4 = x_4 - x_3 \} \\
        &= \sum_{(x_1,...,x_4) \in D} \binom{x_1 - 1}{m-1}p^m q^{x_1 - m} p q^{x_2 - x_1 - 1}\binom{x_3 - x_2 - 1}{n - 2}p^{n - 1}q^{x_3 - x_2 - n + 1} p q^{x_4 - x_3 - 1}\\
        &=\sum_{(x_1,...,x_4) \in D} \binom{x_1 - 1}{m-1}\binom{x_3 - x_2 - 1}{n - 2} p^{m+n+1} q^{x_4 - m - n -1},
    \end{align*}
     where  we use that $\binom{k}{m} = 0$ if $k <m$.
    This last sum can be factored as a product of three sums, one depending only on $x_1$, another depending on $x_2$ and $x_3$ and another one depending only on $x_4$. The latter is a geometric sum, so we will not show how to compute it. For the former (the sum indexed by $x_1$), we will use the fact that
    \begin{equation*}
        \sum_{k=0}^m \binom{n+k}{n} = \binom{m+n+1}{n+1}.
    \end{equation*}
    Bearing this in mind, we find that
    \begin{equation*}
        \sum_{x_1 = m}^{[s]} \binom{x_1 - 1}{m-1} = \sum_{x_1 = 0}^{[s]-m}\binom{m - 1 +x_1}{m-1} = \binom{[s]}{m}.
    \end{equation*}
    Thus, so far we got that
    \begin{equation*}
        \P\{L(s) = m, L(t) - L(s) = n\} = p^{m+n}q^{[t]-m-n}\binom{[s]}{m}\sum_{x_3 = [s]+2}^{[t]} \sum_{x_2 = [s]+1}^{x_3 - 1} \binom{x_3 - x_2 - 1}{n-2}.
    \end{equation*}
    This last sum can be treated by similar means as the sum indexed by $x_1$, but now using that
    \begin{equation*}
        \sum_{j=0}^n \binom{j}{m} = \binom{n+1}{m+1},
    \end{equation*}
   and that $n \geq 2$,
    \begin{align*}
        \sum_{x_3 = [s]+2}^{[t]} \sum_{x_2 = [s]+1}^{x_3 - 1} \binom{x_3 - x_2 - 1}{n-2} &= \sum_{x_2 = [s]+1}^{[t]-1} \sum_{x_3 = x_2+1}^{[t]} \binom{x_3 - x_2 - 1}{n-2} \\
        &= \sum_{x_2 = [s]+1}^{[t]-1} \sum_{x_3 = 0}^{[t]-x_2-1}\binom{x_3}{n-2} \\
        &= \sum_{x_2 = [s]+1}^{[t]-1} \binom{[t] - x_2}{n-1}\\
        &= \sum_{x_2 = 1}^{[t]-[s]-1} \binom{x_2}{n-1} \\
        &= \sum_{x_2 = 0}^{[t]-[s]-1} \binom{x_2}{n-1}\\
        &= \binom{[t]-[s]}{n}.
    \end{align*}
\end{proof}
This result, in particular, shows that (by setting $s = 0$ and $m = 0$),
\begin{align*}
    \P\{L(t) = n\} = q^{[t]}\binom{[t]}{n} \left(\frac{p}{q} \right)^n.
\end{align*}
Moreover, one can easily see as well that
\begin{align*}
    \P\{L(t) - L(s) = n\} &= \sum_{m=0}^{[s]} \P\{L(s) = m, L(t)-L(s)=n\} \\
    &=\sum_{m=0}^{[s]}\binom{[s]}{m}\binom{[t]-[s]}{n} q^{[t]} \left( \frac{p}{q}\right)^{m+n}\\
    &= \binom{[t]-[s]}{n}q^{[t]}\left(\frac{p}{q} \right)^{n} \left(1 + \frac{p}{q} \right)^{[s]} \\
    &= \binom{[t] - [s]}{n}q^{[t]-[s]-n}p^n.
\end{align*}
Which shows that, for any $0 \leq s \leq t$, $L(s)$ and $L(t) - L(s)$ are independent and that, when $0 \leq s \leq t$ with $s,t \in \N_0$, the law of $L(t) - L(s)$ is the same as the law of $L(t - s)$ (observe that this need not be true when $s$ and $t$ are not in $\N_0$).

Finally, one can show that the increments of the process $L$ are independent.
\begin{proposition}
    The increments of $L$ are independent.
\end{proposition}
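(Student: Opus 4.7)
The plan is to reduce the problem to the integer-time skeleton and then use a Bernoulli-trial representation of $L$, which makes the independence of increments transparent.

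Since the $U_k$ are geometric on $\N$, we already observed that $L(t)=L([t])$, so only the values of $L$ at integer times matter. The key fact is that the process $\{L(n)\}_{n\in\N_0}$ admits the representation $L(n)=\sum_{i=1}^n B_i$, where $\{B_i\}_{i\geq 1}$ are i.i.d.\ Bernoulli$(p)$. Indeed, starting from such Bernoulli trials, the waiting times between consecutive successes are i.i.d.\ $\mathrm{Geom}(p)$, so they may be coupled to play the role of the $\{U_k\}_k$; under this coupling, the number of successes in $\{1,\ldots,n\}$ equals the number of $S_k$'s that are $\leq n$, namely $L(n)$.

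With this representation in hand, for any $0\leq t_0<t_1<\cdots<t_k$ we have
$$L(t_j)-L(t_{j-1})=L([t_j])-L([t_{j-1}])=\sum_{i=[t_{j-1}]+1}^{[t_j]} B_i,\qquad j=1,\ldots,k.$$
These sums range over pairwise disjoint subsets of $\N$, and the $B_i$ are independent; hence the increments $L(t_1)-L(t_0),\ldots,L(t_k)-L(t_{k-1})$ are independent.

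An alternative, closer in spirit to the preceding lemma, would be to extend its joint-distribution computation to $k$ time points by introducing independent negative-binomial and geometric blocks between consecutive levels of $L$, and collapsing the resulting nested sums via the hockey-stick identities $\sum_{k=0}^m\binom{n+k}{n}=\binom{m+n+1}{n+1}$ and $\sum_{j=0}^n\binom{j}{m}=\binom{n+1}{m+1}$, thereby matching the product of marginals $\binom{[t_j]-[t_{j-1}]}{m_j}p^{m_j}q^{[t_j]-[t_{j-1}]-m_j}$. The main obstacle in that route is the combinatorial bookkeeping: as already noted in the lemma's proof, one must treat separately the boundary cases where some $m_j$ equal $0$, $1$, or are at least $2$, and the number of cases explodes with $k$. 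The Bernoulli-trial reduction avoids all of this and reduces the proposition to a one-line observation.
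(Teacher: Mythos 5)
Your proof is correct, but it takes a genuinely different route from the paper's. The paper argues via regeneration: it defines the residual lifetimes $U_1^t = S_{L(t)+1}-t$, $U_k^t = U_{L(t)+k}$ for $k\geq 2$, uses the memorylessness of the geometric law to show that, conditionally on $L(t)=n$, the vector $(U_1^t,\dots,U_j^t)$ has the same law as $(U_1,\dots,U_j)$, and then factorizes the joint law of the increments inductively — the same strategy one uses for the Poisson process. You instead exploit the discrete structure directly: since the gaps between successes in an i.i.d.\ Bernoulli$(p)$ sequence are i.i.d.\ $\mathrm{Geom}(p)$ on $\N$, the integer skeleton $\{L(n)\}_{n\in\N_0}$ is equal in law to the partial-sum process $\sum_{i=1}^{n}B_i$, and independence of increments (a property of the law) is then immediate because the increments are sums of the $B_i$ over disjoint index blocks. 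Your representation is shorter, avoids the case analysis entirely, and as a bonus recovers in one stroke the binomial law $\P\{L(t)-L(s)=n\}=\binom{[t]-[s]}{n}p^nq^{[t]-[s]-n}$ that the paper obtains separately from its lemma; the paper's regeneration argument, on the other hand, is the one that generalizes beyond the geometric case (it is the template for any memoryless interarrival distribution). Two small points worth making explicit: the independence-of-increments claim should also include $L(t_0)$ itself in the list of independent variables (your argument covers this, since $L(t_0)=\sum_{i=1}^{[t_0]}B_i$ is a sum over yet another disjoint block), and you should state clearly that it suffices to verify the property for \emph{some} process with the same law as $\{L(n)\}_n$, which is exactly what the coupling provides.
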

\begin{proof}[Sketch of the proof]
    Since $L(t) = L([t])$ for all $t \geq 0$, we only need to show that the increments are independent when the process is considered in $\N_0$. That is, we only need to show that, for every $k \in \N$ and any $0 \leq t_0 \leq t_2 \leq ... \leq t_k$ with $t_j \in \N_0$ for every $j \in \{0,...,k\}$, the random variables $L(t_0), L(t_1) - L(t_0),...,L(t_k) - L(t_{k-1})$ are independent. The proof of this last fact is quite similar to the one for the case in which the random variables $\{U_j\}_j$ are exponentially distributed (that is, when $L$ is a Poisson process).

    Let us define
    \begin{equation*}
        U_1^t = S_{L(t) + 1}-t, \quad U_k^t = U_{L(t) + k}, \quad k \geq 2.
    \end{equation*}
    By the independence the sequence $\{U_j\}_j$, for any $y_1,...,y_j \in \N_0$, we have
    \begin{align*}
        &\P\{L(t) = n, S_{n+1} - t > y_1 , U_{n+2}>y_2,...,U_{n+j}>y_j\}\\
        &= \P\{S_n \leq t < S_{n+1}, S_{n+1} - t > y_1 , U_{n+2}>y_2,...,U_{n+j}>y_j\} \\
        &= \P\{S_n \leq t < S_{n+1}, S_{n+1}-t > y_1\} q^{y_2} \cdot ... \cdot q^{y_j}.
    \end{align*}
    But 
    \begin{align*}
        \P\{S_n \leq t < S_{n+1}, S_{n+1}-t > y_1\} &= \P\{S_n \leq t, U_{n+1} > t-S_n, U_{n+1}> y_1 + t - S_n\}\\
        &= \P\{S_n \leq t, U_{n+1}> y_1 + t - S_n\} \\
        &= \sum_{x = 0}^t \P\{U_{n+1} > y_1 + t - x\} \P\{S_n = x\} \\
        &= q^{y_1} \sum_{x=0}^t \P\{U_{n+1} > t-x\} \P\{S_n = x\} \\
        &= q^{y_1} \P\{S_n \leq t, U_{n+1} > t-S_n\} \\
        &= q^{y_1} \P\{L(t) = n\}.
    \end{align*}
    Hence, 
    \begin{equation*}
        \P\{L(t) = n, S_{n+1} - t > y_1 , U_{n+2}>y_2,...,U_{n+j}>y_j\} = \P\{L(t) = n\}q^{y_1} \cdot ... \cdot q^{y_j}, 
    \end{equation*}
    which, in turn, implies
    \begin{align*}
        \P\{L(t) = n, (U_1^t, ..., U_j^t) \in H\} &= \P\{L(t) = n, S_{n+1} - t > y_1 , U_{n+2}>y_2,...,U_{n+j}>y_j\}\\
        &=\P\{L(t) = n\} \P\{(U_1,...,U_j) \in H\},
    \end{align*}
    being $H = (y_1,\infty)\times ... \times (y_j, \infty)$. Therefore, the law of $(U_1^t,...,U_j^t)$ conditioned on $L(t) = n$ is the same as the law of the vector $(U_1,...,U_j)$. From here, it follows that, for any $n_0,...n_k \in \N_0$
    \begin{align*}
        &\P\{L(t_0) = n_0, L(t_1) - L(t_0) = n_1,..., L(t_k) - L(t_{k-1}) = n_k\}\\
        &= \P\{L(t_0) = n_0, L(t_1) = n_0 + n_1,..., L(t_k) = n_0+...+n_k\} \\
        &= \P\{L(t_0) = n_0, S_{n_0 + ... + n_j} \leq t_j < S_{n_0 + ...+n_{j + 1}}, j=1,...,k\} \\
        &= \P\{L(t_0 )=n_0, U_1^{t_0}+...+U_{n_j}^{t_0} \leq t_j - t_0 < U_1^{t_0}+...+U_{n_{j +1}}^{t_0}, j=1,...,k\} \\
        &= \P\{L(t_0) = n_0, (U_{1}^{t_0},...,U_{n_k + 1}^{t_0}) \in \Tilde{H}\} \\
        &= \P\{L(t_0) = n_0\} \P\{(U_1,...,U_{n_k + 1}) \in \Tilde{H}\} \\
        &= \P\{L(t_0) = n_0\} \P\{L(t_1 - t_0) = n_1,...,L(t_k - t_0) = n_1 +...+n_k\} \\
        &= \P\{L(t_0) = n_0\} \P\{L(t_1 - t_0) =n_1,L(t_2 - t_0) - L(t_1 - t_0) = n_2,...,L(t_k - t_0) - L(t_{k-1} - t_0) = n_k\},
    \end{align*}
    where
    \begin{equation*}
        \Tilde{H} = \{(u_1,...,u_{n_k + 1}) \colon u_1+...+u_{n_1 + ... +n_j} \leq t_j - t_0 < u_1+...+u_{n_1+...+n_j + 1}, j=1,...,k\}.
    \end{equation*}
    Inductively, we can show that
    \begin{align*}
        &\P\{L(t_0) = n_0, L(t_1) - L(t_0) = n_1,..., L(t_k) - L(t_{k-1}) = n_k\} \\
        &= \P\{L(t_0) = n_0\} \prod_{j=1}^k \P\{ L(t_j-t_{j-1}) = n_j\}.
    \end{align*}
    The only thing left to do is to recall that $L(t - s) \sim L(t) - L(s)$ whenever $0 \leq s \leq t$ with $s,t \in \N_0$.
\end{proof}
\section*{Competing Interests}

The authors declare that they have no competing interests.
\printbibliography
\end{document}